\newtheorem{theorem}{Theorem}[section]
\theoremstyle{definition}
\newtheorem{remark}[theorem]{Remark}
\def\Z{\operatorname{\mathbb{Z}}}
\def\C{\operatorname{\mathbb{C}}}
\def\P{\operatorname{\mathbb{P}}}
\def\div{\operatorname{{\rm Div}}}
\def\pic{\operatorname{{\rm Pic}}}
\begin{document}
\begin{center}
{\large A classification of two dimensional integrable mappings and
rational elliptic surfaces}
\end{center}
\begin{center}
A. S. Carstea, T. Takenawa
\end{center}
\begin{center}
{\it National Institute of Physics and Nuclear Engineering, Dept. of Theoretical Physics, Atomistilor 407, 077125, Magurele, Bucharest, Romania}\\ 
{\it Faculty of Marine Technology, Tokyo University of Marine Science and Technology, 2-1-6 Etchu-jima, Koto-Ku, Tokyo, 135-8533, Japan}\\ 
\end{center}

\begin{abstract}
We classify two dimensional integrable mappings by investigating
the actions on the fiber space of rational elliptic surfaces. While the
QRT mappings can be restricted on each fiber, there exist several
classes of integrable mappings which exchange fibers.
We also show an equivalent condition when a generalized Halphen surface becomes a Halphen surface of index $m$.
\end{abstract}

\section{Introduction}
Discrete integrability has attracted a lot of attention in the last twenty years. Although the studies started with lattice (partial difference) equations, gradually they have been focusing on discrete non-autonomous ordinary discrete equations called mappings. Among them the most difficult and perhaps the ones which contain the very heart of integrability are discrete Painlev\'e equations. Initially, in continuous setting, Painlev\'e equations appeared as transcendental functions defined by non-autonomous nonlinear differential equations generalizing the elliptic functions (which become the solutions in the autonomous limit). These equations are integrable not in the Liouville sense (since they are non-autonomous) but in all other aspects, namely Painlev\'e property (only movable singularities are ordinary poles), monodromy preserving deformations of linear differential equations, existence of Lax pairs, $\tau$ functions, Hirota bilinear forms, B\"acklund/Schlesinger transformations etc. \cite{Ablowitz, basilbook1} for example. Moreover, they appear as similarity reductions of completely integrable partial differential equations.

In discrete setting, the first breakthrough was so called {\it singularity confinement criterion} \cite{singconf}, which imposes a {\it finite} number of iterations of singular/indeterminate behavior until it reaches regular dynamics with recovery of initial condition memory. This criterion was very productive and many of the discrete Painlev\'e equations have been obtained \cite{basilbook2-a}. Their integrable character has been settled definitively by finding Lax pairs, bilinear forms B\"acklund/Schlesinger transformations, special solutions etc.\cite{basilbook2-b}.

In \cite{Sakai01}, Sakai showed that every discrete Painlev\'e equation can be obtained as a translational component of an affine Weyl group which acts on a family of generalized Halphen surfaces, i.e. a rational surface with special divisors obtained by 9-blow-ups from $\P^2$. From this viewpoint the Quispel-Roberts-Thomson (QRT) mappings \cite{qrt} are obtained by specializations of the surfaces so that they admit elliptic fibrations.

In autonomous setting, Diller and Favre \cite{DF01} showed that if a K\"ahler surface $S$ admits an automorphism $\varphi$ of infinite order, then (i) $\varphi$ is "linearizable", i.e. it preserves the fibrations of a ruled surface \cite{TEGORS}; (ii) $\varphi$ preserves an elliptic fibration of $S$; or (iii) the algebraic (or topological) entropy of $\varphi$ is positive. The typical example of the second case is so called the QRT mappings \cite{qrt}, while mappings not belonging to the QRT family are discovered by several authors \cite{yahagy, TMNT, tsuda-takenawa, vgr,wgr}.

In this paper, we classify these types of mappings by their relation with rational elliptic surfaces. For this purpose, we consider not only rational elliptic surfaces but also generalized Halphen surfaces. In Section 2, we propose a classification of autonomous rational mappings preserving elliptic fibrations. We also show an equivalent condition when a generalized Halphen surface becomes a Halphen surface of index $m$. Although our classification is rather simple, existence of (simple) examples is nontrivial. In Section 3, we investigate properties of some known examples and construct some new examples. Section 4 is devoted to the conclusions.

Before entering into details, some preliminaries on elliptic surfaces and (generalized) Halphen surfaces may be needed.

A complex surface $X$ is called a rational elliptic surface if there exists a fibration given by the morphism: $\pi:X\rightarrow \P^1$ such that:
\begin{itemize}
\item for all but finitely many points $k\in {\P}^1$ the fibre $\pi^{-1}(k)$ is an elliptic curve;
\item $\pi$ is not birational to the projection : $E\times \P^1 \to\P^1$;
\item no fibers contains exceptional curves of first kind.
\end{itemize}
It is known that a rational elliptic surface can be obtained by 9 blow-ups from $\P^2$ and that the generic fiber of $X$ can be put into a Weierstrass form:
$$f(x,y,k)=y^2+a_1 xy+a_3 y-x^3-a_2 x^2-a_4 x-a_6,$$
where all the coefficients $a_i$ depend on $k$.
Singular fibers can be computed easily by the vanishing of the discriminant: 
$$\Delta\equiv -b_2^2 b_8-8b_4^3-27 b_6^2+9b_2b_4b_6,$$
where $b_2=a_1^2+4a_2, b_4=2a_4+a_1a_3, b_6=a_3^2+4a_6, b_8=a_1^2 a_6+4a_2a_6-a_1a_3a_4+a_2a_3^2-a_4^2.$
The discriminant has degree 12 which gives the number of singular fibers together with their multiplicities. The singularities have been classified by Kodaira according to the irreducible components of singular fibers.

A rational surface $X$ is called a generalized Halphen surface if the anti-canonical divisor $-K_X$ is decomposed into effective divisors $D_1, \dots, D_s$ as $D=\sum_{i=1}^s m_i D_i$ ($m_i\geq 1$), where the linear equivalence class of $D$ is $-K_X$ and $D_i \cdot K_X=0$ for all $i$. A generalized Halphen surface can be obtained from $\P^2$ by successive 9 blow-ups. Generalized Halphen surfaces are classified by the type of $D$ into elliptic, multiplicative and additive type (see \cite{Sakai01} for more details).

A rational surface $X$ is called a Halphen surface (or a Halphen pencil, or an elliptic surface) of index $m$ if the dimension of the linear system $|-kK_X|$ is zero for $k=1,\dots,m-1$ and $1$ for $k=m$. Here, the linear system $|-kK_X|$ is the set of curves on $\P^2$ of degree $3k$ passing through each point of blow-up (base point) with multiplicity $k$. It is known that if $m\geq 2$ a Halphen pencil of index $m$ contains a unique cubic curve $C$ with multiplicity $m$, i.e. $C$ is the unique element of $|-K_X|$. 
It is well known that if $X$ is a Halphen surface of index $m$ and $C$ is nonsingular, then $k(P_1+\cdots+P_9-3P_0)$ is not zero for $k=1,\dots,m-1$ and zero for $k=m$ (here $+$ is the group law on $C$, $P_1,\dots,P_9$ are base points of blow-ups and $3P_0$ is equal by the group law to 3 crossing points with a generic line in $\P^2$). Conversely, for a nonsingular cubic curve $C$ in $\P^2$, if $k(P_1+\cdots+P_9-3P_0)$ is not zero for $k=1,\dots,m-1$ and zero for $k=m$, then there exists a family of curves of degree $3m$ passing through $P_1,\dots,P_9$ with multiplicity $m$, which constitutes a Halphen pencil of index $m$ (see chap. 5 \S 6 of \cite{CD89} for more details).
In the next section, we extend this result into the case where $C$ is singular by using "the period map" for generalized Halphen surfaces.


\section{Classification}

Let $X$ be a rational elliptic surface obtained by 9 blow-ups from $\P^2$. 
The main result is the following classification.\\

\noindent {\bf Classification} 
Let $m$ be a positive integer, $\varphi$ an automorphism of $X$ which preserves the elliptic fibration $\alpha f_0(x,y,z)+\beta g_0(x,y,z)=0$.
Such cases are classified as follows.\\
i-$m$) 
$\varphi$ preserves $\alpha : \beta$ and the degree of fibers is $3m$;\\
ii-$m$) $\varphi$ does not preserve $\alpha: \beta$ and the degree of fibers is $3m$.\\

\begin{remark}\ 
\begin{itemize}
\item
The QRT mappings belong to Case i-1 \cite{Tsuda04}. 
\item
In case ii-$m$, elliptic fibrations admit exchange of fibers. 
\item
The integer $m$ corresponds to the index $m$ of $X$ as a Halphen surface.

\item 
As Duistermaat pointed out in \cite[\S6.3]{Duistermaat}, 
if a rational elliptic surface is a Halphen surface of index one,
or equivalently if a rational elliptic surface has a holomorphic section, 
it can be written in Wierestrass normal form.
This corresponds to Case i-1 or Case ii-1.
On the other hand, the Mordell-Weil group is a normal subgroup of 
its automorphism group that preserves each fiber,
and the Mordell-Weil groups were classified by Oguiso and Shioda \cite{og}.
Therefore Case i-1 has been completely classified in this sense. 

\item 
It is well known (for example van Hoeji's gave an algorithm \cite{Hoeij95} and Viallet et al. used it \cite{vgr}) that there exists a birational transformation 
on $\P^2$ which maps an (possibly singular) elliptic curve in $\P^2$, $\alpha f_0(x,y,z)+\beta g_0(x,y,z)=0$, into the Wierstrass normal form. 
Since in general the coefficients of this transformation are algebraic on a rational function
$\alpha/\beta=g_0(x,y,z)/f_0(x,y,z)$, 
there exists a bialgebraic transformation 
from a Halphen surface of index $m$ to that of index one that preserves the elliptic fibrations.
On the other hand, as shown in Proposition 11.9.1 of \cite{Duistermaat}, 
there does not exist a birational transformation 
from a Halphen surface of index $m$ to that of index $m'$ 
($m\neq m'$) that preserves the elliptic fibrations. 
If two infinite order mappings preserving rational elliptic fibrations are conjugate with each other by a birational mapping $\psi$, the mapping $\psi$ preserves the elliptic fibrations. Thus, two infinite order mappings belonging to different classes of the above classification are not birationally conjugate with each other.
\end{itemize}
\end{remark}

In the rest of this section, we characterize Halphen surfaces as generalized Halphen surfaces.  

Let $X$ be a generalized Halphen surface and $Q$ the root lattice defined as the orthogonal complement of $D$ with respect to the intersection form and $\omega$ a meromorphic 2-form on $X$ with $\div(\omega) = - D_{red}$, where $D_{red}=\sum_i^s D_i$. 
Then, the 2-form $\omega$ determines the period mapping $\chi$ from $Q$ to $\C$ by 
$$\chi(\alpha)=\int_\alpha \omega $$ 
in modulo $\sum_{\gamma} \Z\chi(\gamma)$, where the summation is taken for all the cycles on $D_{red}$ (see examples in the next section and \cite{Sakai01} for more details).
Note that if $X$ is not a Halphen surface of index one,
then the divisor $D$ and thus $\omega$ (modulo a nonzero constant factor) are unique.
The divisor $D$ (or $X$ itself if $X$ is not a Halphen surface of index one) is called elliptic, multiplicative, or additive type if the rank of the first homology group of $D_{red}$ is 2, 1, or 0 respectively. 

\begin{theorem}\label{thm1} \ \\
(ell)\ If a member of $|-K_X|$ is of elliptic type, then $X$ is a Halphen pencil of index $m$ iff 
$\chi(-k K_X)\neq 0$ for $k=1,\dots,m-1$ and $\chi(-m K_X)= 0$.\\
(mult)\ If a member of $|-K_X|$ is of multiplicative type, then the same assertion holds as in the elliptic case.\\
(add)\ If a member of $|-K_X|$ is of additive type, then $X$ is a Halphen pencil of index 1 iff $\chi(-K_X)=0$, and never a Halphen pencil of index $m\geq 2$.
\end{theorem}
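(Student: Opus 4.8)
The plan is to turn the Halphen-index condition, which is a statement about the dimensions of the linear systems $|-kK_X|$, into a statement about vanishing of the period map by means of a restriction/cohomology argument, and then to read off the three cases from the group structure of the generalized Jacobian of $D_{red}$. I would begin by recording the numerical input. Since $X$ is obtained by nine blow-ups of $\P^2$ one has $K_X^2=0$, so $-K_X\in Q$ and $-kK_X=k(-K_X)$ as classes in $Q$; because $\chi$ is a homomorphism this gives the linearity $\chi(-kK_X)=k\,\chi(-K_X)$, which is what lets me reduce the whole theorem to the order of the single element $\chi(-K_X)$. Adjunction gives $p_a(D)=1$ and $\omega_D\cong\mathcal{O}_D$ for the (possibly non-reduced) anticanonical curve $D$, while Riemann--Roch on $X$ together with Serre duality gives $h^2(-kK_X)=h^0((k+1)K_X)=0$ and $\chi(\mathcal{O}_X(-kK_X))=1$, so that $\dim|-kK_X|=h^1(-kK_X)=h^0(-kK_X)-1$ for every $k\ge 0$.

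The heart of the argument is then cohomological. From $-kK_X\sim -(k-1)K_X+D$ I would use the restriction sequence $0\to\mathcal{O}_X(-(k-1)K_X)\to\mathcal{O}_X(-kK_X)\to\mathcal{O}_D(-kK_X)\to0$ and induct on $k$. Assuming $\dim|-jK_X|=0$, equivalently $h^1(-jK_X)=0$, for all $j<k$, the long exact sequence yields $h^0(-kK_X)=1+h^0(\mathcal{O}_D(-kK_X))$. Since $-kK_X\cdot D_i=0$ for every component, $\mathcal{O}_D(-kK_X)$ has multidegree $0$, and on the arithmetic-genus-one curve $D$ (with $\omega_D\cong\mathcal{O}_D$) such a bundle has $h^0=1$ when it is trivial and $h^0=0$ otherwise. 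Hence $\dim|-kK_X|$ stays $0$ and then jumps to exactly $1$ at the first $k$ for which $\mathcal{O}_D(-kK_X)\cong\mathcal{O}_D$; that is, $X$ is a Halphen surface of index $m$ precisely when $m$ is the least $k$ with $\mathcal{O}_D(-kK_X)$ trivial.

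It remains to identify this triviality with the vanishing of $\chi$. The period map is exactly the generalized Abel--Jacobi map sending a class to $[-kK_X|_{D_{red}}]\in\pic^0(D_{red})\cong\C/\Lambda$, realised by integrating $\omega$, so $\chi(-kK_X)=0$ detects triviality of the restriction. The three cases are distinguished by the group $\C/\Lambda$: when $D_{red}$ has $H_1$ of rank $2$ (elliptic) the target is an elliptic curve, and when the rank is $1$ (multiplicative) it is $\C^{*}=\C/2\pi i\,\Z$, both of which carry torsion of every order, so by linearity $\chi(-K_X)$ has a well-defined order $m$ and the stated equivalence follows; when the rank is $0$ (additive) the target is $\C$, which is torsion-free, so $k\,\chi(-K_X)=0$ forces $\chi(-K_X)=0$, giving index one exactly when $\chi(-K_X)=0$ and excluding every index $m\ge2$. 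For nonsingular $D$ this recovers the classical criterion of \cite{CD89} quoted in the introduction, with $\chi(-kK_X)$ equal to $k(P_1+\cdots+P_9-3P_0)$.

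The step I expect to be the main obstacle is the precise identification just described, namely that the period integral $\chi(-kK_X)$ vanishes if and only if the line bundle $\mathcal{O}_D(-kK_X)$ is trivial on the full, possibly non-reduced, anticanonical scheme $D$ used in the cohomological step, rather than merely on $D_{red}$, on which $\omega$ actually has its pole. Controlling the unipotent (additive) part of $\pic^0(D)$ contributed by the nilpotents, and verifying that it introduces no torsion-free directions beyond those recorded by $H_1(D_{red})$, is the delicate point; the clean trichotomy of the theorem is precisely the assertion that the relevant obstruction is governed by $H_1(D_{red})$.
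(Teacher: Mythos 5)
Your proposal takes a genuinely different route from the paper. The paper proves (ell) by citation (Remark 5.6.1 of \cite{CD89}), (mult) and (add) in index one by Proposition 23 of \cite{Sakai01}, and obtains the remaining cases by a perturbation argument: it moves the ninth blow-up point along a member $C_\lambda$ of a pencil of cubics through the other eight points, uses continuity of the period $\chi_\lambda$ to transfer non-vanishing (resp.\ vanishing) of $\chi(-mK_X)$ to the nearby surface with nonsingular anticanonical curve, where the classical criterion applies, and uses closedness of the condition ``there is a pencil of degree-$3m$ curves with multiplicity $m$ at the nine points'' to pass back to the limit. Your cohomological skeleton is intrinsic and, as far as it goes, sound: $K_X^2=0$ gives, via Riemann--Roch and $h^2(-kK_X)=0$, that $\dim|-kK_X|=h^0(-kK_X)-1$, the restriction sequence plus induction gives $\dim|-kK_X|=h^0(\mathcal{O}_D(-kK_X))$ while the lower systems stay zero-dimensional, and the index is then the order of $[\mathcal{O}_D(-K_X)]$ in $\pic^0(D)$. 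One caveat there: the statement ``a multidegree-zero bundle on $D$ has $h^0\in\{0,1\}$, with $1$ iff trivial'' is Mumford's lemma on indecomposable curves of canonical type and should be invoked as such, since $D$ may be reducible and non-reduced and a naive ``arithmetic genus one'' argument does not cover that.

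The genuine gap is the step you yourself flag and leave open, and as written it is not merely incomplete but wrong in one case: you place the obstruction in $\pic^0(D_{red})$. For additive non-reduced types (say $D$ of type $E_8^{(1)}$, where $D_{red}$ is a tree of rational curves) one has $H^1(\mathcal{O}_{D_{red}})=0$, hence $\pic^0(D_{red})=0$: every multidegree-zero bundle on $D_{red}$ is trivial, while $\chi(-K_X)$ need not vanish, so your criterion as stated would assign index one to every additive surface, contradicting part (add). The correct receptacle is $\pic^0(D)$ of the full non-reduced scheme: since $h^0(\mathcal{O}_D)=1$ and the Euler characteristic of $\mathcal{O}_D$ is $0$, one has $h^1(\mathcal{O}_D)=1$, and the exponential sequence on $D$ (whose underlying topological space is $D_{red}$) yields $\pic^0(D)\cong\C/\Lambda$ with $\Lambda$ the image of $H^1(D_{red},\Z)$, of rank $2$, $1$, $0$ in the three cases --- an elliptic curve, $\C^*$, $\C$ --- so no torsion-free directions appear beyond this single copy of $\C$, which settles your ``unipotent part'' worry. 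What then still must be proved is that the integral $\chi(\alpha)=\int_\alpha\omega$ computes the composite $Q\to\pic^0(D)\cong\C/\Lambda$; that identification is essentially the content of Sakai's period-map construction and his Proposition 23 (which the paper itself quotes for $m=1$), so the missing lemma is available in the literature, but until you import or reprove it your argument does not close --- and it is exactly the point the paper's proof sidesteps by degenerating to the nonsingular cubic.
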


\begin{proof}
Case (ell) is a classical result (see Remark 5.6.1 in \cite{CD89} or references therein).
Case (mult) and case (add) of index 1 are Proposition 23 in \cite{Sakai01}.   
Similar to that proof, we can vary $D$ and $\chi$ continuously to nonsingular case.
Indeed, let $P_1,\dots,P_9$ be the points of blow-ups (possibly infinitely near, we assume $P_9$ is the point for the last blow-up) and $f_0$ be the cubic polynomial defining $D$.
There exists a pencil of cubic curves $C_\lambda: f_\lambda=f_0+\lambda f_1=0$ $\lambda \in \P^1$ passing through the 8 points $P_1,\dots,P_8$. For small $\lambda$, the cubic curve $C_\lambda$ is close to $D$, and the meromorphic 2-form $\omega_\lambda$ for $C_\lambda$ is also close to $\omega$. Let $P_9'$ be a point close to $P_9$ on $C_\lambda$ such that
$$\lim_{\lambda\to 0}\chi_\lambda(-mK_{X'})= \lim_{\lambda\to 0} \int_{-mK_{X'}} \omega' = \int_{-mK_X} \omega' = \chi(-mK_X) 
$$ 
holds,y
where $X'$ is the surface obtained by blow-ups at $P_1,\dots,P_8$ and $P_9'$ instead of $P_9$. 
Thus, $\chi_\lambda (-mK_{X'})\neq 0$ holds if $\chi(-mK_{X})\neq 0$ for small $\lambda$, and therefore $X$ does not have a pencil of degree $3m$.
Conversely, if $\chi(-mK_{X})= 0$, then $\chi'(-mK_{X'})$ is close to zero, and there exists $P_9''$ close to $P_9'$ on $C'$ such that $\chi_\lambda(-mK_{X''})= 0$. Thus, we have 
$$\lim_{\lambda\to 0}\chi_\lambda(-mK_{X''})= \chi(-mK_X).$$  
Since $X''$ has (at least) a pencil of curves of degree $3m$ passing through the 9 points with multiplicity $m$ and this condition is closed in the space of coefficients of polynomials defining curves, $X$ also has the same property.
\end{proof}

\begin{remark}\label{q}
In Painlev\'e context, for multiplicative case, $\chi$ is normalized so that $\chi(\gamma)=2\pi i$ for a simply connected cycle $\gamma$ on some $D_i$, and the parameter ``$q$'' is defined as $q=\exp \chi(-K_X)$, i.e. the condition $\chi(-mK_X)=0$ corresponds to $q^m=1$.  
We must point out here
that in \cite{grt} and \cite{grtw} similar study has been done on $q$-Painlev\'e equations, and it is reported that Eq. (3.1) of \cite{grt} with $q=\sqrt{-1}$ preserves degree (4,4) pencil, which seems contradict to the above theorem, but there the definition of $q$ is different from ours (its square root is our $q$).
\end{remark}

\section{Examples}

In this section, we are going to give examples for case i-2, ii-1 and ii-2.
A typical example of Case i-1 is the QRT mappings. There is some literature on their relation to rational elliptic surfaces \cite{Tsuda04, Duistermaat}, and we are not going to discuss it here.
In the first subsection, we investigate the action on the space of initial conditions of some mapping of Case ii-1, which was proposed in \cite{tsuda-takenawa}. In the second subsection, we show that one of the HKY mappings belongs to Case i-2. Theoretically, from Theorem~\ref{thm1}, we can construct mappings of the type i-$m$ for any integers. Actually, let $\phi(q)$ be some $q$-discrete Painlev\'e equation and $q$ a primitive $m$-th root of unity, then
$\phi^m(q)$ is autonomous and preserves the Halphen fibration of index $m$.
However, the degrees of mappings obtained in this way are very high.
The HKY mapping is much simpler example. In the third subsection, we construct some example for Case ii-2, which we believe as the first example for this case.

\subsection{Case ii-1}

We start with a mapping \cite{stef, tsuda-takenawa,grt} which preserves elliptic fibration of degree $(2,2)$ but exchanges the fibers:
\begin{align}\label{ii-1}
x_{n+1}&=-x_{n-1}\frac{(x_n-a)(x_n-1/a)}{(x_n+a)(x_n+1/a)}.
\end{align}
In this subsection, studying space of initial conditions (values), we compute the conserved quantity, the parameter ``$q$'' and all singular fibers. We also clarify the relation with the $q$-discrete Painlev\'e VI equation ($qP(A_3^{(1)})$) by Sakai's notation by deautonomizing the mapping \eqref{ii-1}, where the label $A_3^{(1)}$ corresponds to the type of space of initial conditions.

First of all, in order to compactify the space of dependent variables, we write the equations in projective space as a two component system:
$$\phi:\P^1\times \P^1\to \P^1\times \P^1, \phi(x,y)=(\overline x,\overline y),$$
\begin{align}
\overline x&=y \nonumber\\
\overline y&=-x \frac{(y-a)(y-1/a)}{(y+a)(y+1/a)}.
\end{align}
We use $\P^1\times \P^1$ instead of $\P^2$ just because the parameters of blowing-up points become easy to write.
The projective space $\P^1\times \P^1$ is generated by the following coordinate system ($X=1/x,Y=1/y$):
$$\P^1\times \P^1=(x,y)\cup(X,y)\cup(x,Y)\cup(X,Y).$$
The indeterminate points for the mappings $\phi$ and $\phi^{-1}$ are
\begin{eqnarray*}
P_1:(x,y)=(0,-a),&P_2:(x,y)=(0,-1/a),\\
P_3:(X,y)=(0,a),&P_4:(X,y)=(0,1/a),\\
P_5:(x,y)=(a,0),&P_6:(x,y)=(1/a,0),\\
P_7:(x,Y)=(-a,0),&P_8:(x,Y)=(-1/a,0).
\end{eqnarray*}
Let $X$ be the surface obtained by blowing up these points. Then,  
$\phi$ is lifted to an automorphism of $X$. Such a surface $X$ is called the space of initial conditions. More generally, if a sequence of mappings $\{\phi_n\}$ is lifted to a sequence of isomorphisms from a surface $X_n$ to a surface $X_{n+1}$, each surface $X_n$ is called the space of initial conditions. 

The Picard group of $X$ is a $\Z$-module:
$$\pic(X)=\Z H_x\oplus \Z H_y\oplus \bigoplus_{i=1}^{8}\Z E_i,$$
where $H_x$, $H_y$ are the total transforms of the lines $x={\rm const.}$, $y={\rm const.}$ and $E_i$ are the total transforms of the eight points of blow-ups. 
The intersection form of divisors is given by $H_z\cdot H_w=1-\delta_{zw},\quad E_i\cdot E_j=-\delta_{ij},\quad H_z\cdot E_k=0$ for $z,w=x,y$.
Also the anti-canonical divisor of X is 
$$-K_X=2H_x+2H_y-\sum_{i=1}^{8}E_i.$$

Let us denote an element of the Picard lattices by $A=h_0 H_x+h_1 H_y+\sum_{i=1}^8 e_i E_i$ ($h_i,e_j\in{\bf Z}$), then the induced bundle mapping is acting on it as 
\begin{align*}
&\phi_{*}(h_0,h_1,e_1,...,e_8)\\
=&(h_0,h_1,e_1,...,e_8)
\left(
\begin{array}{cccccccccc}
2&1&0&0&0&0&-1&-1&-1&-1\\
1&0&0&0&0&0&0&0&0&0\\
1&0&0&0&0&0&0&0&-1&0\\
1&0&0&0&0&0&0&0&0&-1\\
1&0&0&0&0&0&-1&0&0&0\\
1&0&0&0&0&0&0&-1&0&0\\
0&0&1&0&0&0&0&0&0&0\\
0&0&0&1&0&0&0&0&0&0\\
0&0&0&0&1&0&0&0&0&0\\
0&0&0&0&0&1&0&0&0&0\\
\end{array}\right).
\end{align*}
It preserves the decomposition of $-K_X=\sum_{i=0}^3 D_i$:
\begin{align}
&D_0=H_x-E_1-E_2,\ D_1=H_y-E_5-E_6 \nonumber \\
&D_2=H_x-E_3-E_4,\ D_3=H_y-E_7-E_8, \label{Dii-1}
\end{align}
which constitute the $A_3^{(1)}$ type singular fiber: $xy=1$.

One can see that the elliptic curves
\begin{align*}
F\equiv &\alpha xy-\beta((x^2+1)(y^2+1)+(a+1/a)(y-x)(xy+1))=0\\
&\Leftrightarrow \  k xy-((x^2+1)(y^2+1)+(a+1/a)(y-x)(xy+1))=0
\end{align*}
correspond to the anti-canonical class (these curves pass through all $E_i$'s for any $(\alpha:\beta$). 
This family of curves defines a rational elliptic surface. One can see that even though the anti-canonical class is preserved by the mapping, the each fiber is not. More precisely, the action changes $k$ in $-k$.

So, as a conclusion, the dimension of the linear system corresponding to the anti-canonical divisor is $1$. It can be written as
$\alpha f_1(x,y)+\beta f_2(x,y)=0\Leftrightarrow k f_1(x,y)+f_2(x,y)=0$ for $\alpha:\beta \in \P(\C)$ and 
$\deg f=\deg g =(2,2)$. This elliptic fibration is preserved by the action of the dynamical system but not trivially in the sense that the fibers are exchanged. The conserved quantity becomes higher degree as $(f/g)^\nu$ for some $\nu>1$. In our case $\nu=2$ and the invariant is exactly the same as the result of \cite{tsuda-takenawa}.

\begin{remark}
In order to have a Weierstrass model, we perform some homographic transformations according to the algorithm of Schwartz \cite{schwarz}. Then, after long but straightforward calculations, we can compute the roots of the elliptic discriminant $\Delta (k)$ as
\begin{align*}
k_1=0, \quad{\rm multiplicity}=2&\\
k_{2,3}=\pm 4(1+a^2)/a, \quad{\rm multiplicity}=1&\\
k_{4,5}=\pm (1-a^2)^2/a^2,\quad {\rm multiplicity}=2&\\
k_6=\infty,\quad {\rm multiplicity}=4&.
\end{align*}
We have $A_1^{(1)}$ singular fiber for $k_1$ and $k_{4,5}$, $A_0^{(1)}$ fiber for $k_{2,3}$, and $A_3^{(1)}$ fiber for $k_6$. The mapping acts on these singular fibers as an exchange as ($k_1\to k_1, k_2\rightarrow k_3\rightarrow k_2, k_4\rightarrow k_5\rightarrow k_4, k_6\to k_6$).
\end{remark}
 
\begin{remark}
If a surface is a generalized Halphen surface but not a Halphen surface of index $1$, then the anti-canonical divisor $-K_X$ is uniquely decomposed to a sum of effective divisors as $-K_X=\sum m_i D_i$ and we can characterize the surface by the type the decomposition. However, if the surface is Halphen of index $1$, it may have several types of singular fibers as this example. 
\end{remark}

Next, we consider deautonomization of the mapping $\phi$. 
For that, we will use the decomposition \eqref{Dii-1} of $-K_X$ which is preserved by the mapping, though the decomposition and hence the deautonomization are not unique (the fiber corresponding to $k=0$ is also preserved).   

The affine Weyl group symmetries are related to the orthogonal complement of $D_{red}=\{D_1,\dots,D_4\}$. In order to see this, we note that ${\rm rank}\ \pic(X)={\rm rank}\ \langle H_x,H_y,E_1,...E_8\rangle_{\Z}=10$.
The orthogonal complement of $D_{red}$:
\begin{align*}
\langle D\rangle ^{\bot}&=\{\alpha\in \pic(X)|\alpha\cdot D_i=0, i=0,3\}
\end{align*}
has 6-generators:
\begin{align*}
&\langle D\rangle ^{\bot}=\langle \alpha_0,\alpha_1,...,\alpha_5\rangle _{\Z}\\
&\alpha_0=E_1-E_2,\alpha_1=E_3-E_4,\alpha_2=H_y-E_1-E_3\\
&\alpha_3=H_x-E_5-E_7,\alpha_4=E_5-E_6,\alpha_5=E_7-E_8.
\end{align*}
\begin{figure}[ht]
\begin{center}
\includegraphics[width=6cm]{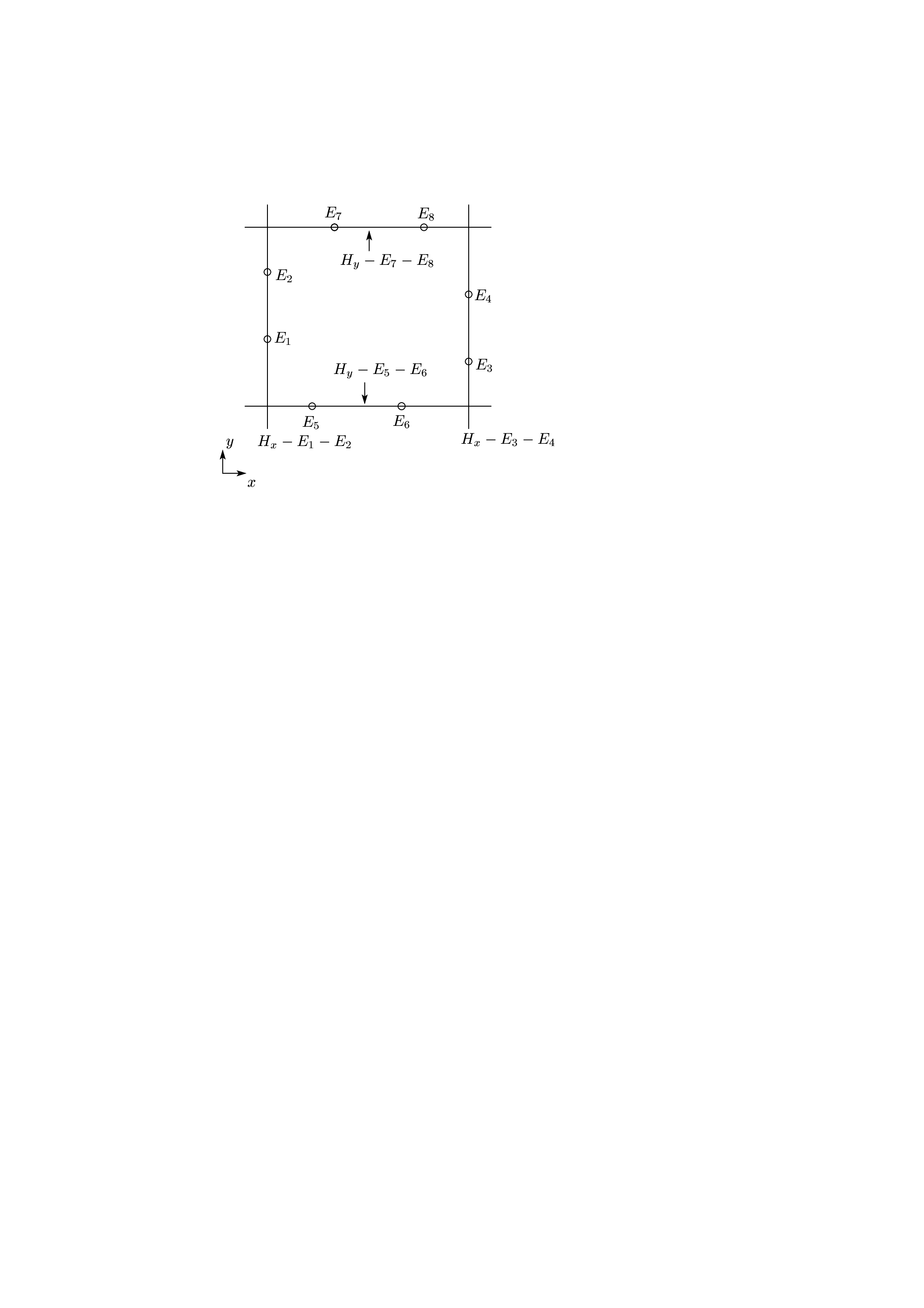}
\mbox{\raisebox{10mm}{\includegraphics[width=4cm]{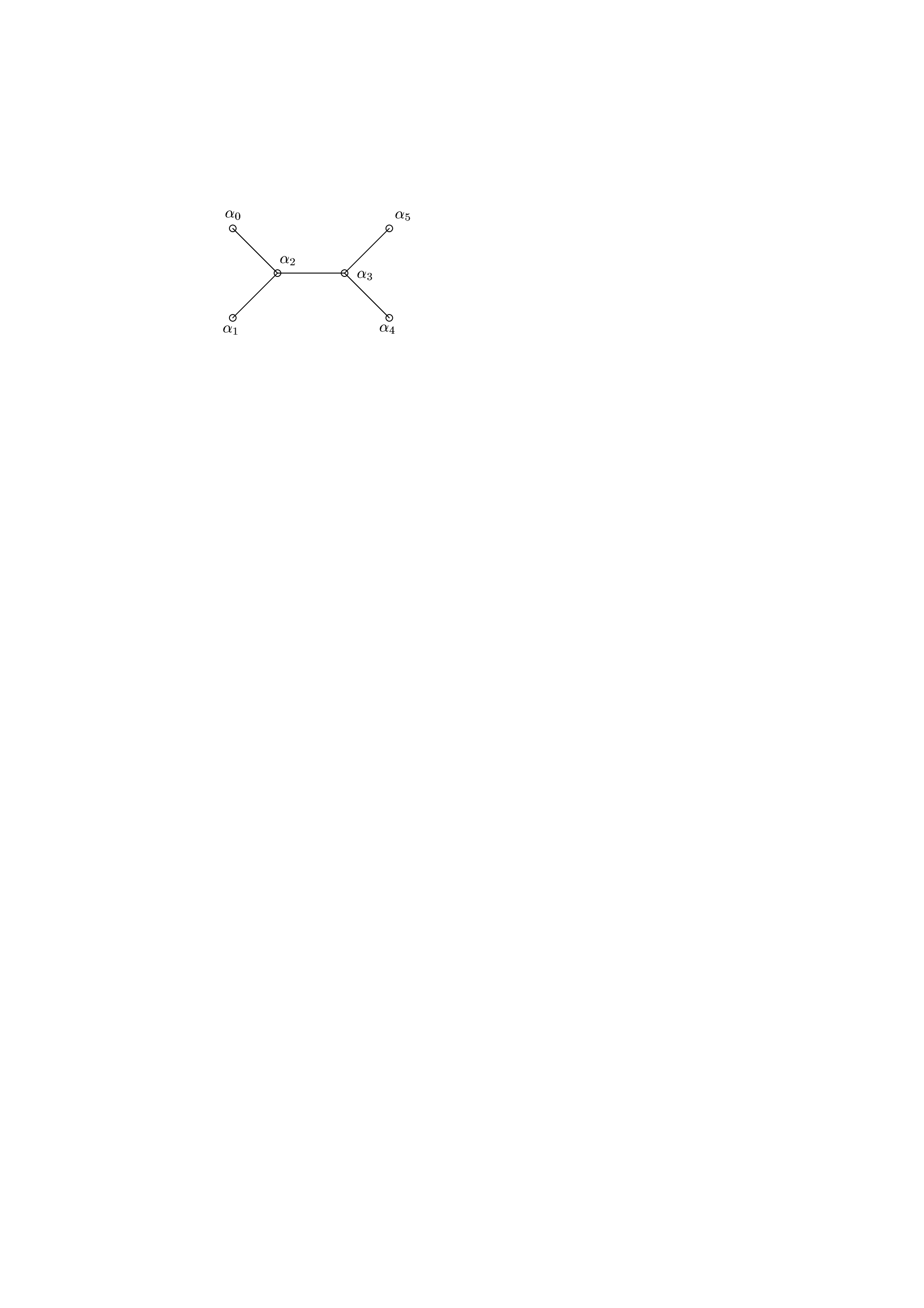}}}
\caption{Singular fiber and orthogonal complement.}
\end{center}
\end{figure}

Related to them, we define elementary reflections:
$$w_i:\pic(x)\to \pic(X), w_i(\alpha_j)=\alpha_j-c_{ij}\alpha_i,$$
where $c_{ji}=2(\alpha_j\cdot\alpha_i)/(\alpha_i\cdot\alpha_i)$. One can easily see that $c_{ij}$ is a Cartan matrix of $D_5^{(1)}$-type for the root lattice
$Q=\bigoplus_{i=0}^5 \Z\alpha_i$. We also introduce permutations of roots:
\begin{align*}
&\sigma_{10}:(\alpha_0,\alpha_1,\alpha_2,\alpha_3,\alpha_4,\alpha_5)\mapsto(\alpha_1,\alpha_0,\alpha_2,\alpha_3,\alpha_4,\alpha_5)\\
&\sigma_{tot}:(\alpha_0,\alpha_1,\alpha_2,\alpha_3,\alpha_4,\alpha_5)\mapsto(\alpha_5,\alpha_4,\alpha_3,\alpha_2,\alpha_1,\alpha_0).
\end{align*}
The group generated by reflections and permutations becomes an extended affine Weyl group:
$$\widetilde W(D_5^{(1)})=\langle w_0,w_1,...,w_5,\sigma_{10},\sigma_{tot}\rangle.$$

This extended affine Weyl group can be realized as an automorphisms of a family of generalized Halphen surfaces which are obtained by allowing the points of blow-ups to move so that they preserve the decomposition of $-K_X$ as
\begin{eqnarray*}
P_1:(x,y)=(0,a_1),&P_2:(x,y)=(0,a_2),\\
P_3:(X,y)=(0,a_3),&P_4:(X,y)=(0,a_4),\\
P_5:(x,y)=(a_5,0),&P_6:(x,y)=(a_6,0),\\
P_7:(x,Y)=(a_7,0),&P_8:(x,Y)=(a_8,0),
\end{eqnarray*}
which can be normalized as $a_1a_2a_3a_4=a_5a_6a_7a_8=1$.
Accordingly, our mapping {\it lives} in an extended affine Weyl group $\widetilde W(D_5^{(1)})$ and deautonomized as
\begin{align*}
\tilde{\phi}:\ \left\{
\begin{array}{rl}
\overline x&=a_1a_2y \nonumber\\
\overline y&=\displaystyle -x \frac{(y-a_3)(y-a_4)}{(y-a_1)(y-a_2)}
\end{array}\right.
\end{align*}
with
\begin{align*}
&(a_1,a_2,a_3,a_4,a_5,a_6,a_7,a_8,q)\\
&\mapsto
(-\frac{1}{\sqrt{q}a_6},-\frac{1}{\sqrt{q}a_5},-\frac{\sqrt{q}}{a_8},-\frac{\sqrt{q}}{a_7},a_3,a_4,a_1,a_2,q),
\end{align*}
where
\begin{align}\label{qii-1}
q=\frac{a_1a_2a_7a_8}{a_3a_4a_5a_6}.
\end{align}
This mapping can be decomposed by elementary reflections as
\begin{align*}
\tilde{\phi}_{*}=&\sigma_{10}\circ \sigma_{tot}\circ \sigma_{10}\circ \sigma_{tot}\circ w_2\circ w_1\circ w_0\circ w_2\circ w_1\circ w_0
\end{align*}
and acts on the root lattice as 
\begin{align*}
&(\alpha_0,\alpha_1,\alpha_2,\alpha_3,\alpha_4,\alpha_5)\\
&\mapsto (-\alpha_5,-\alpha_4,-\alpha_3,\alpha_2+2\alpha_3+\alpha_4+\alpha_5,\alpha_0,\alpha_1).
\end{align*}
Hence, $\tilde{\phi}^4$ is a translational element of the extended affine Weyl group, and therefore one of the $q$-Painlev\'e VI equations ($qP(A_3^{(1)})$) in
Sakai's sense, while the original $q$-Painlev\'e VI studied 
in \cite{Sakai01} was
\begin{align*}
qP_{{\rm VI}}:\ \left\{
\begin{array}{rl}
\overline x&=\displaystyle -\frac{y}{a_1a_2} \nonumber\\
\overline y&=\displaystyle  -\frac{(y-a_1)(y-a_2)}{x(y-a_3)(y-a_4)}
\end{array}\right.
\end{align*}
with
\begin{align*}
&(a_1,a_2,a_3,a_4,a_5,a_6,a_7,a_8,q)\\
&\mapsto
(-\sqrt{q}a_5,-\sqrt{q}a_6,-\frac{a_7}{\sqrt{q}},-\frac{a_8}{\sqrt{q}},-a_1,-a_2,-a_3,-a_4,q),
\end{align*}
which is decomposed by elementary reflections as
\begin{align*}
qP_{{\rm VI}}&=\sigma_{10}\circ w_1\circ  w_0\circ  w_2\circ w_1\circ w_0\circ w_2\circ w_1\circ w_0
\end{align*}
and acts on the root lattice as
\begin{align*}
&(\alpha_0,\alpha_1,\alpha_2,\alpha_3,\alpha_4,\alpha_5)\\
&\mapsto (-\alpha_4,-\alpha_5,-\alpha_3,-\alpha_2+\delta,-\alpha_0,-\alpha_1)\\
&(\delta=\alpha_0+\alpha_1+2\alpha_2+2\alpha_3+\alpha_4+\alpha_5).
\end{align*}

At the last of this subsection, we define the period map $\chi:Q\to \C$ and
compute $q$ \eqref{qii-1} by using
\begin{align}
\omega&=\frac{1}{2\pi i}\frac{dx\wedge dy}{xy}.\label{omega}
\end{align}
For example, $\chi(\alpha_0)$ is computed as follows.
The exceptional divisors $E_1$ and $E_2$ intersect with $D_0$
at $(x,y)=(0,a_1)$ and $(0,a_2)$, and  $\chi(\alpha_0)$ is computed as
\begin{align*}
\chi(\alpha_0)=&\int_{|x|=\varepsilon,\ y=a_2\sim a_1} \frac{1}{2\pi i}\frac{dx\wedge dy}{xy}\\   
=& - \int_{a_2}^{a_1} \frac{dy}{y} \\
=&\log \frac{a_2}{a_1},   
\end{align*}
where $y=a_2\sim a_1$ denotes a path from $y=a_2$ to $y=a_1$ in $D_1$. According to the ambiguity of paths, the result should be considered in modulo $2\pi i \Z$. Similarly, we obtain
\begin{align*}
& \chi(\alpha_0)=\log \frac{a_2}{a_1},\
\chi(\alpha_1)=\log \frac{a_3}{a_4},\
\chi(\alpha_2)=\log \frac{a_1}{a_3},\\
&\chi(\alpha_3)=\log \frac{a_7}{a_5},\
\chi(\alpha_4)=\log \frac{a_5}{a_6},\
\chi(\alpha_5)=\log \frac{a_8}{a_7},
\end{align*}
and therefore we have 
$$\chi(-K_X)=\log \frac{a_1a_2a_7a_8}{a_3a_4a_5a_6}
$$ 
and $q$ as \eqref{qii-1} (see Remark~\ref{q}).
For the mapping $\phi$, we have $q=1$.

\subsection{Case i-2}
We consider the following HKY mapping which is a symmetric reduction of $qP_{{\rm V}}$ for $q=-1$ \cite{yahagy} (also \cite{basilbook2-a} pg. 311).
\begin{align}
\bar{x}&=\frac{(x-t)(x+t)}{y(x-1)}\nonumber\\
\bar{y}&=x \label{i-2}
\end{align}
We define the space of initial conditions as a rational surface obtained by blow-ups from 
$\P^1\times \P^1$ at 8 points:
\begin{eqnarray*}
&P_1:(x,y)=(a_1,0)=(t,0),
&P_2:(x,y)=(a_2,0)=(-t,0)\\
&P_3:(x,y)=(0,a_3)=(0,t),
&P_4:(x,y)=(0,a_4)=(0,-t)\\
&P_5:(x,y)=(1,\infty),
&P_6:(x,y)=(\infty,1)\\
&P_7:(x,y)=(\infty,\infty),
&P_8:(x,x/y)=(\infty,a_5)=(\infty,1),
\end{eqnarray*}
where $a_i$'s wii be used for deautonomization later.
The system acts the surface as a holomorphic automorphism.

Again we investigate the linear system of the anti-canonical divisor class
$-K_X=2H_x+2H_y-E_1-\cdots-E_8$.
For the example, $\dim -K_X$ is zero and $\dim -2K_X$ is one. 
Actually, we have 
\begin{align*}
|-2K_X|=&\alpha x^2y^2 +\beta (2x^2y^3+2x^3y^2+x^2y^4+x^4y^2-2x^3y^3-\\&
2xy^4-2x^4y+x^4+y^4+2t^2(x y^2+x^2y-y^2-x^2)+t^4)\equiv \alpha f+\beta g,
\end{align*}
and  
\begin{align*}k=\frac{g}{f}=&\Big( 2x^2y^3+2x^3y^2+x^2y^4+x^4y^2-2x^3y^3-
2xy^4-2x^4y\\
&+x^4+y^4+2t^2(x y^2+x^2y-y^2-x^2)+t^4)\Big)\Big/ (x^2y^2)
\end{align*}
is the conserved quantity. So it belongs to Case ii-1.

\begin{remark}\ 
We say a curve $f(x,y)=0$ passes through a point $(x_0,y_0)$ 
with multiplicity $m$ if 
$\frac{\partial^j f(x_0,y_0)}{\partial x^p \partial y^q}=0$ for any
$j\leq m$ and $p+q=j$. 
The calculation of multiplicity at $P_8$ is very sensitive.
For example, for $f(x,y)=x^2y^2$, we relate $F(X,Y)=X^2Y^2$ so that the sum of degrees is $(4,4)$. Since this curve passes through $P_7$ with multiplicity 2, the proper transform of the curve in the coordinate: 
$(u,v)=(X,Y/X)$ is given by $F(u,uv)/u^2=u^2v^2$, which passes through
$P_8:(u,v)=(0,1)$ with multiplicity 2.
\end{remark}

\begin{remark}
The unique anti-canonical divisor $-K_X$ is decomposed as 
$-K_X=\sum_{i=0}^4 D_i$ by
\begin{align*}
&D_0=H_y-E_1-E_2,\ D_1=H_x-E_6-E_7\\
&D_2=E_7-E_8,\ D_3=H_y-E_5-E_6,\ D_4=H_x-E_3-E_4,
\end{align*}
which constitute $A_4^{(1)}$-type singular fiber $xy=1$.
The orthogonal complement of $D_i$'s is generated by
\begin{align*}
&\alpha_0=H_x+H_y-E_1-E_3-E_7-E_8,\ \alpha_1=E_1-E_2\\
&\alpha_2=H_x-E_1-E_5, \alpha_3=H_y-E_3-E_6,\ \alpha_4=E_3-E_4,
\end{align*}
which forms the Dynkin diagram of type $D_4^{(1)}$.
Let $\omega$ be the same as \eqref{omega}, then 
the period map $\chi:Q\to \C$ for $Q=\bigoplus_{i=0}^4 \Z\alpha_i$ 
is computed  as
\begin{align*}
& \chi(\alpha_0)=\log \left(-\frac{a_3}{a_1a_5}\right),\
\chi(\alpha_1)=\log \frac{a_1}{a_2},\
\chi(\alpha_2)=\log \frac{1}{a_1},\\
&\chi(\alpha_3)=\log a_3,\
\chi(\alpha_4)=\log \frac{a_4}{a_3},
\end{align*}
and therefore $$\chi(-K_X)=\log \frac{a_3a_4}{a_1a_2a_5}+\pi i$$
Hence we can take $q$ as 
$$q=-\frac{a_3a_4}{a_1a_2a_5}$$ 
and we have $q=-1$ for the original mapping.

\begin{figure}[ht]
\begin{center}
\includegraphics[width=6cm]{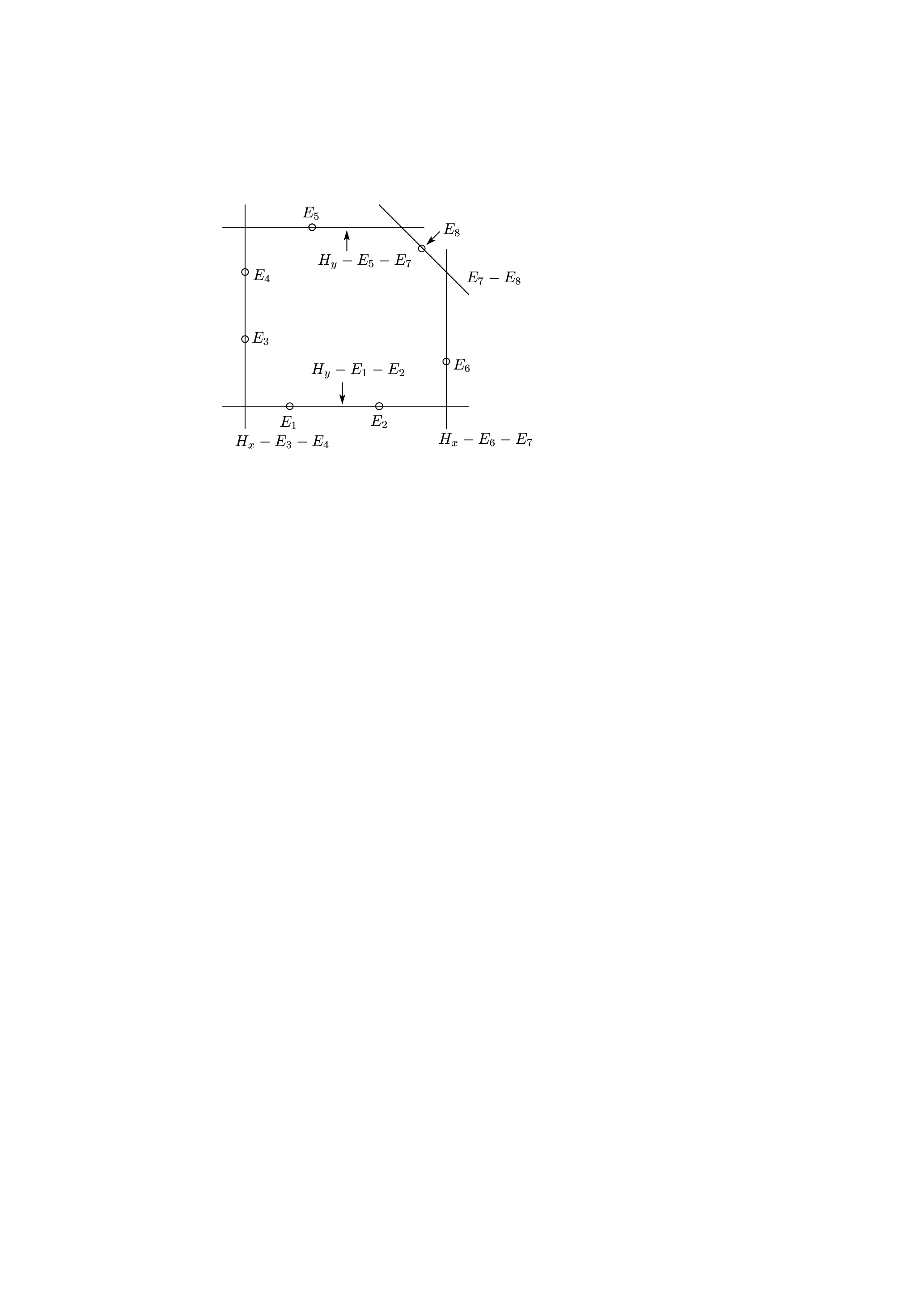}
\mbox{\raisebox{10mm}{\includegraphics[width=4cm]{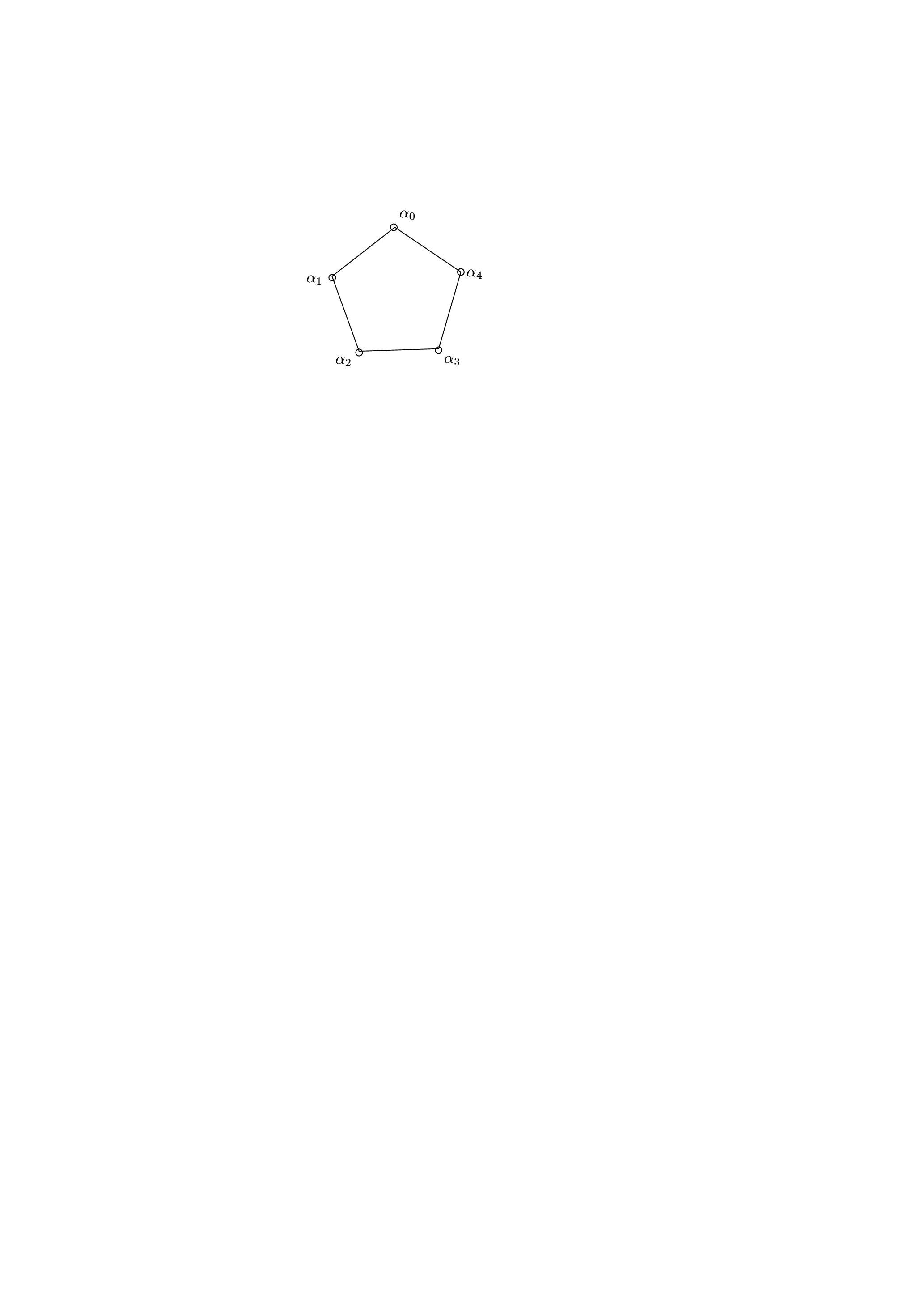}}}
\caption{Singular fiber and orthogonal compliment.}
\end{center}
\end{figure}
\end{remark}

The mapping \eqref{i-2} can be deautonomized to one of $qP_{{\rm V}}$ ($qP(A_4^{(1)}$)) as
\begin{align*}
\begin{array}{rl}
\overline x&=\displaystyle \frac{a_5(x-a_1)(x-a_2)}{y(x-1)} \nonumber\\
\overline y&=x
\end{array}
\end{align*}
with
\begin{align*}
&(a_1,a_2,a_3,a_4,a_5,q)
\mapsto
(\frac{a_4}{q},\frac{a_3}{q},a_1,a_2,-\frac{1}{a_5},q),
\end{align*}
which acts on the root lattice as
\begin{align*}
&(\alpha_0,\alpha_1,\alpha_2,\alpha_3,\alpha_4)
\mapsto (\alpha_2+\alpha_3+\alpha_4,-\alpha_4,-\alpha_3,\alpha_0+\alpha_3+\alpha_4,\alpha_1).
\end{align*}
While the original $qP_{{\rm V}}$ mapping was
\begin{align*}
\begin{array}{rl}
\overline x&=\displaystyle \frac{a_5(x-a_1)(x-a_2)}{y(x-1)} \nonumber\\
\overline y&=x
\end{array}
\end{align*}
(same with the above) with
\begin{align*}
&(a_1,a_2,a_3,a_4,a_5,q)
\mapsto
(\frac{a_4}{q},\frac{a_3}{q},a_2,a_1,-\frac{1}{a_5},q),
\end{align*}
which acts on the root lattice as
\begin{align*}
&(\alpha_0,\alpha_1,\alpha_2,\alpha_3,\alpha_4)\\
&\mapsto (\alpha_1+\alpha_2+\alpha_3+\alpha_4,-\alpha_4,-\alpha_3,\alpha_0+\alpha_1+\alpha_3+\alpha_4,-\alpha_1).
\end{align*}

\subsection{Case ii-2}
We consider the following mapping $\varphi$: 
\begin{align}
\varphi:&\left\{ \begin{array}{ll} \vspace{2mm}
\bar{x}&= \displaystyle
\frac{x(-ix(x+1) + y(bx+1))}{y( x(x-b)+ iby(x-1))}\\ 
\bar{y}&= \displaystyle
\frac{x(x(x+1) + iby(x-1))}{b(x(x+1) - iy(x-1))}\end{array}\right.,
\label{exii-2}
\end{align}
which is obtained by specializing
one of $qP(A_5^{(1)})$ equation.
Notice that the space of initial conditions for 
both $qP_{{\rm III}}$ and $qP_{{\rm IV}}$ is
the generalized Haphen surface of type of $A_3^{(1)}$ \cite{Sakai01}, and thus we may not be able to say that 
a translational element of the corresponding affine Weyl group
is one of $qP_{{\rm III}}$ equations or $qP_{{\rm IV}}$ equations.

The inverse of $\varphi$ is
\begin{align}
\varphi^{-1}:&\left\{ \begin{array}{ll} \vspace{2mm}
\underline{x}&= \displaystyle
\frac{y(bxy-bx - by+1)}{xy -x+ by -1}\\ 
\underline{y}&= \displaystyle
\frac{-iy(bxy-bx - by+1)(bxy+x-by+1)}{
bx(xy-x-y -1)(xy-x+by-1)}\end{array}\right.
\end{align}
and the space of initial conditions is obtained by blow-ups from 
$\P^1\times \P^1$ at 8 points:
\begin{eqnarray*}
&P_1:(x,y)=(-1,0),
&P_2:(x,y)=(0,1/b)\\
&P_3:(x,y)=(1,\infty),
&P_4:(x,y)=(\infty,1)\\
&P_5:(x,y)=(0,0),
&P_6:(x,y/x)=(0,i)\\
&P_7:(x,y)=(\infty,\infty),
&P_8:(x,x/y)=(\infty, -ib).
\end{eqnarray*}
Then $\varphi$ acts the surface as a holomorphic automorphism. 

For the above example, $\dim |-K_X|$ is zero and $\dim |-2K_X|$ is one. 
Actually, we have 
\begin{align*}
|-2K_X|: \qquad & \begin{array}{rl} 0=&k f_0(x,y)- f_1(x,y)
\\ =&
k x^2y^2- 
 \Big(ix(x+1)^2 -i (x+i)(x^2-1)y\\& + b(x-1)^2y^2\Big)
  \Big(-ix(y-1) + y(by-1)\Big)\\\end{array}.
\end{align*}
By $\varphi$, the parameter
$$k=\frac{f_1(x,y)}{f_0(x,y)}$$
is mapped to $-k$. 
So, $$k^2=\left(\frac{f_1(x,y)}{f_0(x,y)}\right)^2$$ is the conserved quantity
and $\varphi$ belongs to Case ii-2.

We found this example by observing the following facts:
\begin{itemize}
\item  Let $X$ be a generalized Halphen surface of multiplicative type, then
$\exp(\chi(-K_X))$ is the parameter $q$ of the corresponding $q$-discrete 
Painlev\'e equation. From Theorem~\ref{thm1},
if $q$ is a primitive $m$-th root of unity, then $\dim |-k K_X|$ is
$0$ for $k=1,2,\dots,m-1$ and $1$ for $k=m$. \\
\item Let $\psi$ be an automorphism of the surface. 
If there exists another automorphism $\sigma$ of the surface such that 
$\sigma$ acts the base space of $|-m K_X|$ nontrivially, then 
$\varphi=\sigma \circ \psi$ belongs to the case ii-2 unless it is finite order.
\end{itemize}

First, we consider the family of generalized Halphen surfaces of type $A_5^{(1)}$. 
Those surfaces are obtained by blow-ups from 
$\P^1\times \P^1$ at 8 points:
\begin{eqnarray*}
&P_1:(x,y)=(b_1,0),
&P_2:(x,y)=(0,1/b_2)\\
&P_3:(x,y)=(1,\infty),
&P_4:(x,y)=(\infty,1)\\
&P_5:(x,y)=(0,0),
&P_6:(x,y/x)=(0,c)\\
&P_7:(x,y)=(\infty,\infty),
&P_8:(x,x/y)=(\infty, 1/(c b_0)).
\end{eqnarray*}
The anti-canonical divisor $xy=0$ is decomposed by
\begin{align*}
&H_x-E_2-E_5,\ E_5-E_6,\ H_y-E_1-E_5, \\
&H_x-E_4-E_7,\ E_7-E_8,\ H_y-E_3-E_7,
\end{align*}
and their orthogonal complement is generated by
\begin{align*}
\alpha_0&=H_x+H_y-E_5-E_6-E_7-E_8\\ 
\alpha_1&=H_x-E_1-E_3\\ 
\alpha_2&=H_y-E_2-E_4\\
\beta_0&=H_x+H_y-E_1-E_2-E_7-E_8\\
(\beta_1&=H_x+H_y-E_3-E_4-E_5-E_6).
\end{align*}
The period map $\chi:Q\to \C$ for the same $\omega$ with \eqref{omega} is computed as
\begin{align*}
\chi(\alpha_0)=-\log b_0,\ 
\chi(\alpha_1)=-\log b_1,\ 
\chi(\alpha_2)=-\log b_2,\\ 
\chi(\beta_0)=-\log(-cb_0b_1b_2),\
\chi(\beta_1)=\log(-c),
\end{align*}
and therefore $\chi(-K_X)=-\log(b_0b_1b_2)$.
We set $q=(b_0b_1b_2)^{-1}$.

\begin{figure}[ht]
\begin{center}
\includegraphics[width=6cm]{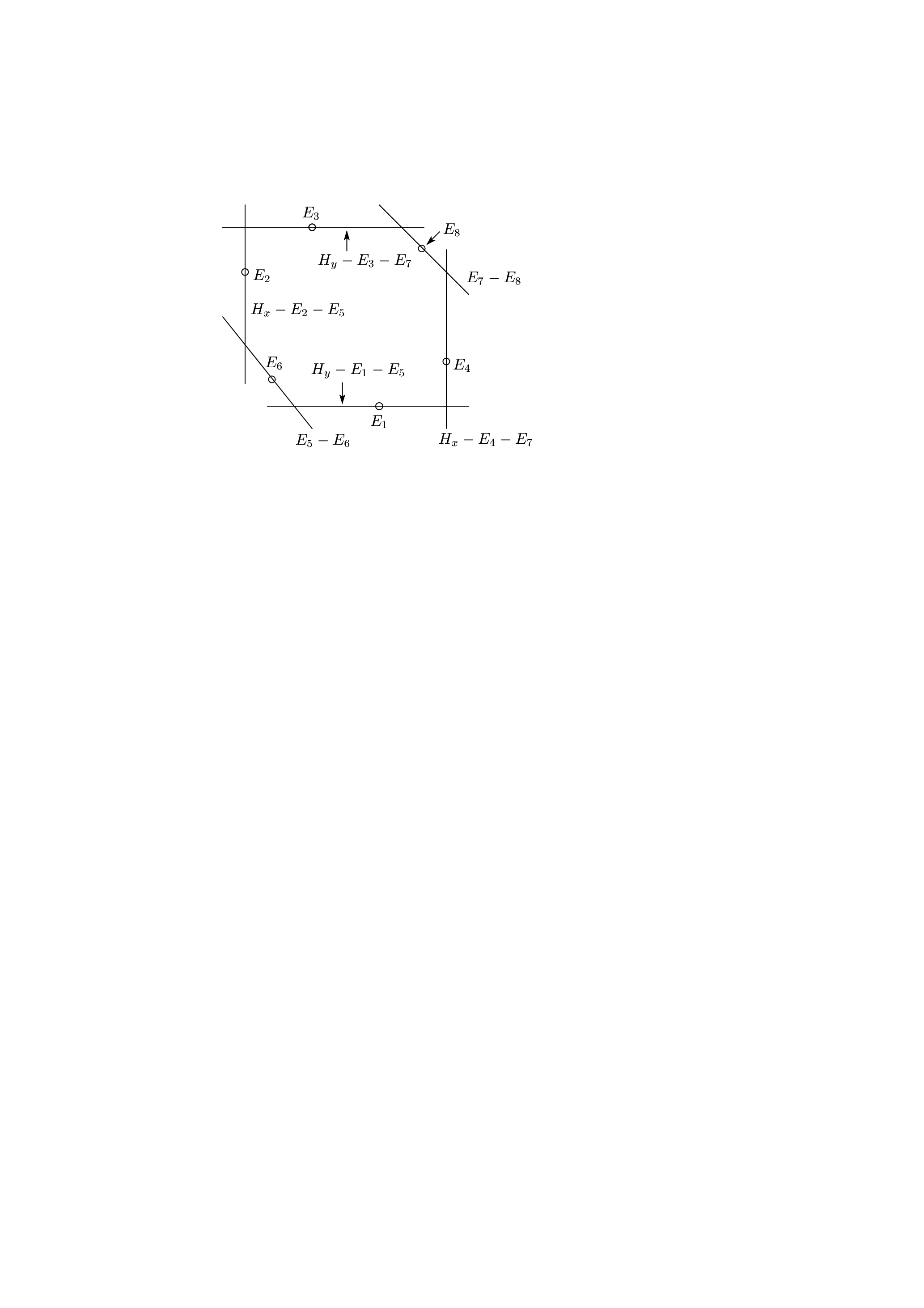}
\mbox{\raisebox{10mm}{\includegraphics[width=4cm]{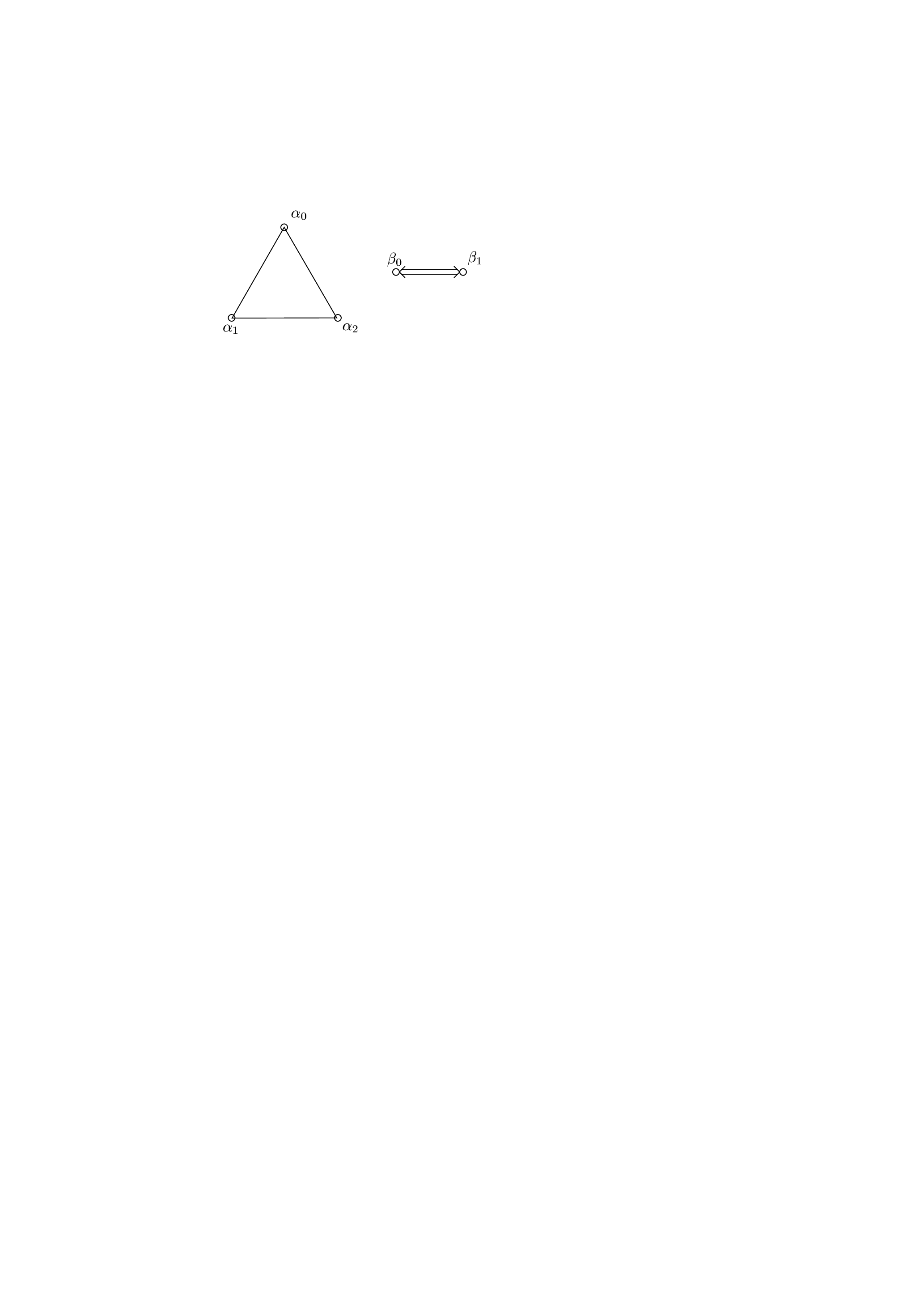}}}
\caption{Singular fiber and orthogonal complement.}
\end{center}
\end{figure}

The following actions generate the group of automorphisms 
of the family of surfaces, whose type is $A_2^{(1)}+ A_1^{(1)}$:
\begin{align*}
& (x,y;~b_0,b_1,b_2,c) \mbox{ is mapped to }\\
w_{\alpha_1}:&
\left(\frac{x}{b_1},\frac{y(x-1)}{x-b_1};~b_0b_1,
\frac{1}{b_1},b_1b_2,c\right)\\
w_{\alpha_2}:& 
\left(\frac{b_2x(y-1)}{b_2y-1},b_2 y;~b_0b_2,
b_1b_2,\frac{1}{b_2},c\right)\\
\pi:&
\left(y,x;~\frac{1}{b_0},
\frac{1}{b_1},\frac{1}{b_2},\frac{1}{c}\right)\\
\rho:&  
\left(\frac{1}{y},\frac{cx}{y};~b_1,
b_2,b_0,c\right)\\
w_{\beta_1}:&  
\left(-\frac{cx(xy-x-y)}{xy+cx-y},
-\frac{y(xy-x-y)}{(cxy-cx+y)}
;~b_0,b_1,b_2,\frac{1}{c}\right)\\
\sigma:&
\left(\frac{b_1}{x},\frac{1}{b_2 y};~b_0,
b_1,b_2,\frac{1}{b_0b_1b_2c}\right),
\end{align*}
$w_{\alpha_0}=\rho^{-1}\circ  w_{\alpha_2}\circ   \rho$ and 
$w_{\beta_0}=\sigma\circ  w_{\beta_1}\circ  \sigma$.

Here, $w_{\alpha_i}$ acts
as the elementary reflection of the affine Weyl group of type $A_2^{(1)}$ on $\bigoplus_i \Z\alpha_i$ and
trivially on $\bigoplus_j \Z\beta_j$. Similarly, 
$w_{\beta_i}$ acts trivially on $\bigoplus_i \Z\alpha_i$ and  
as the elementary reflection of the affine Weyl group of type $A_1^{(1)}$ on 
$\bigoplus_j \Z\beta_j$.  
The generators $(\alpha_0, \alpha_1, \alpha_2, \beta_0, \beta_1)$ are mapped 
by $\pi$, $\rho$, $\sigma$ to  
\begin{align*}
\pi:& (\alpha_0, \alpha_2, \alpha_1, \beta_0, \beta_1)\\
\rho:& (\alpha_2, \alpha_0, \alpha_1, \beta_0, \beta_1)\\
\sigma:& (\alpha_0, \alpha_1, \alpha_2, \beta_1, \beta_0).
\end{align*}

If $q=(b_0b_1b_2)^{-1}=-1$, then $\chi(-K_x)=-\log(-1)=-\pi i \mod 2\pi i \Z$,
and $|-2K_X|$, i.e.
the set of curves of degree $(4,4)$ passing through the blow-up points with multiplicity 2, is given by
\begin{align*}
&k_0 x^2y^2 +k_1\Big(c^2x^4(y-1)^2 + 2b_1cx^2(cxy-cx + y + b_2y^2(xy-x-y)) +\\
&  b_1^2(c^2x^2 + 2cxy(b_2y-1) + (y + b_2y^2(x-1))^2)
\Big)=0.
\end{align*}
Moreover, if $c=i$, then $\sigma$ acts identically on the parameter space 
and maps $k_1/k_0$ to $-k_1/k_0$.

Let $\psi=(w_{\alpha_1}\circ w_{\alpha_2}\circ \rho)^2$, where
$w_{\alpha_1}\circ w_{\alpha_2}\circ  \rho$ is the original $qP_{{\rm III}}$ equation
\begin{align*}
&(x,y;~b_0,b_1,b_2,c)\\
&\mapsto
\left(
\frac{cx-y}{b_2y(b_0cx-y)},
\frac{cx(b_0(cx-y)-y(b_0cx-y))}{y((cx-y)-b_2y(b_0cx-y))};~
\frac{b_0}{q},b_1q,b_2,c
\right)
\end{align*}
and acts on the root lattice
as
\begin{align*}
&(\alpha_0,\alpha_1,\alpha_2,\beta_0,\beta_1)\\
&\mapsto
(\alpha_0-\delta,\alpha_1+\delta,\alpha_2,\beta_0,\beta_1)\\
&(\delta=\alpha_0+\alpha_1+\alpha_2=\beta_0+\beta_1).
\end{align*}
Then the mapping $\psi$ acts trivially on the parameter space. Since it is very intricate mapping, we restrict the parameters to $b_0=1/b$, $b_1=-1$ and $b_2=b$, then we have $\varphi=\sigma\circ  \psi$ as \eqref{exii-2}, which acts 
on the root lattice as
\begin{align*}
&(\alpha_0,\alpha_1,\alpha_2,\beta_0,\beta_1)\\
&\mapsto
(\alpha_0-2\delta,\alpha_1+2\delta,\alpha_2,\beta_1,\beta_0).
\end{align*}

As a conclusion, the mapping $\sigma\circ w_{\alpha_1}\circ w_{\alpha_2}\circ \rho \circ w_{\alpha_1}\circ w_{\alpha_2}\circ \rho$ with the full parameter $b_1,b_2,b_3,c$
is one of $qP(A_5^{(1)}$) equation and gives the mapping by specializing of the parameters.

\section{Conclusions}

In this paper we tried to extend the algebraic-geometric approach (given in \cite{Sakai01}, \cite{DF01}) to second order mappings preserving elliptic fibrations. The case of invariants of higher degree occurs when corresponding surface
is a Halphen surface of higher index or when fibers are exchanged each other. 
We gave a classification of such mappings and also a theorem which characterize their space of initial conditions as Halphen pencils of higher index. Finally three examples showed explicit realizations of the above mentioned results. Existence of examples of Case ii-$m$ for $m\geq 3$ is a future problem.  

A finer classification may be done by the types of singular fibers and the automorphism of surfaces. Indeed, the symmetries of generalized Halphen surfaces have a close relationship with the Mordell-Weil lattice of rational elliptic surfaces. However, there are too many types of surfaces and we gave a coarse but useful classification in this paper.

\subsection*{Acknowledgement}

T.~T. is supported by the Japan Society for the
Promotion of Science, Grand-in-Aid for Young Scientists (B).
A. ~S. Carstea is supported by the project PN-II-ID-PCE-2011-3-0137. 
Also the authors
want to thanks the referees for observations about text and missed
references.


\begin{thebibliography}{99}

\bibitem{Ablowitz}
Ablowitz, M., {Painlev\'e-type equations}, in Hazewinkel, M., {\it Encyclopaedia of Mathematics}, (Springer, Berlin, 2001)

\bibitem{stef} 
Carstea, A. S., Grammaticos, B. Ramani, A., {Deautonomizing integrable non-QRT mappings},
J. Phys. A: Math. Gen. \textbf{42} (2009) article number 485207


\bibitem{CD89} 
Cossec, F. R. Dolgachev, I. V.,
{\it Enriques surfaces. I}, 
Progress in Mathematics \textbf{76} (Birkhauser, Boston, 1989)


\bibitem{DF01} 
Diller, J. Fravre, Ch., {Dynamics of bimeromorphic maps of surfaces},  American Journal of  Mathematics \textbf{123} (2001) 1135--1169


\bibitem{Duistermaat} 
Duistermaat, J. J., {\it Discrete Integrable Systems: QRT Maps and Elliptic Surfaces}, 
(Springer-Verlag New York, 2010)


\bibitem{basilbook1} 
Grammaticos, B. Ramani, A., {Integrability - and how to detect it}, in {\it Integrability of nonlinear systems}, Lecture Notes in Physics \textbf{638} (Springer, Berlin, 2004) 31--94

\bibitem{basilbook2-a} 
Grammaticos, B. Ramani, A., {Discrete Painlev\'e equations: a review}, in {\it Discrete integrable systems}, Lecture Notes in Physics \textbf{644} (Springer, Berlin, 2004) 245--321


\bibitem{singconf} 
Grammaticos, B., Ramani A. Papageorgiou V., {Do integrable mappings have the Painlev\'e property?}, Phys. Rev. Lett. \textbf{67} (1991) 1829--1832


\bibitem{grt}
Grammaticos, B., Ramani, A., Tamizhmani, K.M., {Mappings of Hirota
Kimura Yahagy can have periodic coeficients too}, J. Phys. A: Math.
Theor. \textbf{44} (2011) 015206, 11 pp

\bibitem{grtw}
Grammaticos, B., Ramani, A., Tamizhmani, K.M., Willox, R., {On
Quispel-Roberts-Thomson extensions and integrable correspondences},
J. Math. Phys.  \textbf{52} (2011) 05350, 11 pp

\bibitem{Hoeij95}
van Hoeij, M., {An algorithm for computing the Weierstrass normal form},
Proceedings of the 1995 international symposium on Symbolic and algebraic computation (1995) 90--95   


\bibitem{yahagy} 
Kimura, K., Yahagi, H., Hirota, R., Ramani, A., Grammaticos, B. Ohta, Y., {A new class of integrable discrete systems}, J. Phys. A: Math. Gen. \textbf{35} (2002) 9205--9212

\bibitem{og} 
Oguiso, K., Shioda, T.,
{The Mordell-Weil lattice of a rational elliptic surface}, 
Comment. Math. Univ. St. Paul. \textbf{40} (1991) 83--99

\bibitem{qrt} 
Quispel, G. R. W., Roberts, J. A. G. Thomson, C. J., {Integrable mappings and soliton equations 2}, Physica \textbf{D34} (1989) 183--192


\bibitem{Sakai01} 
Sakai, H., 
{Rational surfaces associated with affine root systems and geometry of the Painlev\'e equations}, Comm. Math. Phys. \textbf{220} (2001) 165--229


\bibitem{schwarz} 
Schwarz, F. C., {An elliptic surface of Mordell-Weil rank 8 over the rational numbers}, Journal de Theorie des Nombres de Bordeaux \textbf{6} (1994) 1--8


\bibitem{TEGORS} 
Takenawa, T., Eguchi, M., Grammaticos, B., Ohta, Y., Ramani, A. Satsuma, J., {The space of initial conditions for linearizable mappings}, Nonlinearity \textbf{16} (2003) 457--478


\bibitem{basilbook2-b} 
Tamizhmani, K. M., Tamizhmani, T., Grammaticos, B., Ramani, A., {Special solutions for discrete Painlev\'e equations}, in {\it Discrete integrable systems}, Lecture Notes in Physics \textbf{644} (Springer, Berlin, 2004)  323--382

\bibitem{TMNT} 
Tanaka, H., Matsukidaira, J., Nobe, A., Tsuda, T., {Constructing two-dimensional integrable mappings that possess invariants of high degree},
RIMS K\^oky\^uroku Bessatsu \textbf{B13} (2009) 75--84

\bibitem{tsuda-takenawa}  
Tsuda, T., Grammaticos, B., Ramani, A., Takenawa, T., {A class of integrable and nonintegrable mappings and their dynamics}, Lett. Math. Phys. \textbf{82} (2007) 39--49 

\bibitem{Tsuda04}  
Tsuda, T., {Integrable mappings via rational elliptic 
surfaces.}, J. Phys. A \textbf{37} (2004) 2721--2730. 



\bibitem{vgr}
Viallet, C.M., Grammaticos, B., Ramani, A.,
{On the integrability of correspondences associated
to integral curves}, Phys. Lett. A \textbf{322} (2004) 186--289


\bibitem{wgr} Willox, R., Grammaticos, B., Ramani, A.,
{A study of the antisymmetric QRT mappings},
J. Phys. A \textbf{38} (2005) 5227--5236

\end{thebibliography}
\end{document}